\newcommand{\bt}{\mathbin{\tikz [x=1.6ex,y=1.6ex,line width=.1ex] \draw (0,0) -- (1,1) (0,1) -- (1,0);}}%
\newtheorem{theorem}{Theorem}
\newtheorem{lemma}{Lemma}
\newtheorem{proposition}{Proposition}
\newtheorem{remark}{Remark}
\DeclareMathOperator{\cone}{cone}
\DeclareMathOperator{\md}{mid}
\newcommand{\lng}{\langle}
\newcommand{\rng}{\rangle}
\newcommand{\lf}{\left}
\newcommand{\rg}{\right}
\newcommand{\R}{\mathbb R}
\newcommand{\N}{\mathbb N}
\newcommand{\Hy}{\mathcal H}
\newcommand{\f}{\frac}
\newcommand{\tp}{^\top}
\begin{document}

\title{Extended Lorentz cones and variational inequalities on cylinders 
\thanks{{\it 2010 AMS Subject Classification:} 90C33, 47H07, 47H99, 47H09. {\it Key words and phrases:} isotone projections, cones, variational 
inequalities, Picard iteration, fixed point}
}
\author{S. Z. N\'emeth\\School of Mathematics, The University of Birmingham\\The Watson Building, Edgbaston\\Birmingham B15 2TT, United Kingdom\\email: s.nemeth@bham.ac.uk\and G. Zhang\\School of Mathematics, The University of Birmingham\\The Watson Building, Edgbaston\\Birmingham B15 2TT, United Kingdom\\email: gxz245@bham.ac.uk}
\date{}
\maketitle

\begin{abstract}
	Solutions of a variational inequality are found by giving conditions for the monotone convergence with respect to a cone of the Picard 
	iteration corresponding to its natural map. One of these conditions is the isotonicity of the projection onto the closed convex set in
	the definition of the variational inequality. If the closed convex set is a cylinder and the cone is an extented Lorentz cone, then this
	condition can be dropped because it is automatically satisfied. The obtained result is further particularized for unbounded box
	constrained variational inequalities. For this case a numerical example is presented.
\end{abstract}

\section{Introduction}

In this paper we will study the solvability of variational inequalities on closed convex sets by using the isotonicity of the metric projection 
mapping onto these sets with respect to the partial order defined by the cone. Apparently this approach has not 
been considered before. 

Variational inequalities are models of various important
problems in physics, engineering, economics and other sciences. The classical Nash equilibrium concept can also be reformulated by using 
variational inequalities. 

It is known that a vector is a solution of a variational inequality if and only if it is a zero of the corresponding 
natural mapping \cite{FacchineiPang2003}. Hence, a variational inequality on a closed convex set is equivalent to finding the fixed points of 
the difference between the identity mapping and its natural mapping. The latter mapping is just the composition between the projection onto
the latter closed convex set and the difference between the identity mapping and the mapping defining the variational inequality. I we could
guarantee the isotonicity of the mappings in the latter composition with respect to the partial order defined by the cone, then by an iterative 
process we could construct 
an increasing sequence with respect to the partial order defined by the cone by using Picard's iteration. If we could also guarantee that this
sequence is bounded from above with respect to the partial order defined by the cone, then this sequence would be convergent to the solution
of the variational inequality (in fact it would be convergent to a solution of the corresponding equivalent fixed point problem). It turns out
that there is a class of variational inequalities and a class of cones, that extend the Lorentz cone, for which this idea works very well. The
only restriction is that the variational inequality has to be defined on a cylinder. Such problems appears in the practice. For example the
unbounded box constrained variational inequalities are of this form. Based on the above idea a theorem for finding solutions of variational 
inequalities on a cylinder will be presented and an example will be given. Similar ideas to the above ones were presented in
\cite{IsacNemeth1990b,IsacNemeth1990c,Nemeth2009,AbbasNemeth2011,AbbasNemeth2012,AbbasNemeth2012b} for complementarity and implicit
complementarity problems, but with a strong restriction on the cone defining the problem. The idea of monotone convergence 
for complementarity problems defined by general cones is considered first in the paper \cite{NemethZhang}. The present paper extends the 
results of \cite{NemethZhang} for variational inequalities.

Several other papers dealt with conditions of 
convergence for iterations similar to the above one, as for example 
\cite{Auslender1976,Bertsekas1989,Iusem1997,Khobotov1987,Korpelevich1976,Marcotte1991,Nagurney1993,Sibony1970,Solodov1999,Solodov1996,Sun1996}. 
However, neither of these works used the ordering defined by a cone for showing the convergence of the corresponding iterative scheme. Instead, they used as a tool the Banach fixed point theorem and assumed Kachurovskii-Minty-Browder type monotonicity 
(see\cite{Kachurovskii1960,Minty1962,Minty1963,Browder1964}) and global Lipschitz properties of $F$. 

The structure of the
paper is as follows: In the Preliminaries we introduce the terminology and notations use throughout this paper. In Section 3 we recall the
definition and basic properties of the extended Lorentz cone, and determine all sets onto which the projection is isotone with respect to this
cone. In Section 4 we will find solutions of a variational inequality by analyzing the monotone convergence with respect to a cone of the Picard 
iteration corresponding to its natural map. In Section 5 we will particularize these results to variational inequalities defined on cylinders, by
using the extended Lorentz cone for the corresponding monotone convergence above. In this case we can drop the condition of Proposition
\ref{psumm} that the projection onto the closed convex set in the definition of the variational inequality is isotone with respect to the 
extended Lorentz cone, because this condition is automatically satisfied obtaining the more explicit result of Theorem \ref{tsumm}. The latter 
result extend our results in \cite{NemethZhang} for mixed complementarity problems. In Section 6 we further particularize the results for 
unbounded box constrained variational inequalities. In Section 7 we will give a numerical example for this case. 

\section{Preliminaries}

Denote by $\mathbb{N}$ the set of nonnegative integers. Let $k,m,p,q\in\mathbb{N}\setminus\{0\}$ and $\mathbb{R}^m$ be the $n$-dimensional real 
Euclidean vector space. 

Define the direct product space $\mathbb{R}^p \times \mathbb{R}^q$ as the pair of vectors $(x,u)$, where $x \in \mathbb{R}^p$ and 
$u \in \mathbb{R}^p$. 

Identify the vectors of $\R^m$ by column vectors and consider the canonical inner product in $\R^m$ defined by 
$\R^m\times\R^m\ni (x,y)\mapsto x^\top y\in\R$ with induced norm $\R^m\ni x\mapsto \|x\|=\sqrt{x\tp x}\in\R$.

 The vector space $\mathbb{R}^p \times \mathbb{R}^q$ can be identified with $\mathbb{R}^{p+q}$ via the mapping 
$\mathbb{R}^p \times \mathbb{R}^q\ni (x,u)\mapsto (x\tp, u\tp)\tp \in \mathbb{R}^{p+q}$. This identification leads to the inner product in 
$\mathbb{R}^p \times \mathbb{R}^q$ given by
\begin{equation*}
((x,u),(y,v))\mapsto x\tp y+u\tp v
\end{equation*}

The closed set $K\subset\R^m$ will be called a \emph{cone} if $K\cap(-K)=\{0\}$ and $\lambda x+\mu y\in K$, whenever $\lambda,\mu\ge0$ and 
$x,y\in K$. Let $K\subset \mathbb{R}^{p+q}$ be a  cone. Denote $\leq_K$ the relation defined by $x\leq_K y\iff y-x\in K$ 
and call it \emph{the partial order defined by $K$}. The relation $\leq_K$ is reflexive, transitive, antisymmetric and compatible with 
the linear structure of $\mathbb{R}^m$ in the sense that $x\leq_K y$ implies that $tx+z\leq_K ty+z$, for any $z\in\mathbb{R}^m$ and any 
$t\ge0$. Moreover, $\leq_K$ is continuous at $0$ in the sense that if $x^n\to x$ when $n\to\infty$ and $0\le_K x^n$ for any 
$n\in\mathbb{N}$, then $0\leq_K x$. Conversely any reflexive, transitive and antisymmetric relation $\preceq$ in $\R^m$ which is compatible with 
the linear structure of $\mathbb{R}^m$ and it is continuous at $0$ is defined by a  cone. More specifically, $\preceq=\leq_K$, where 
$K=\{x\in\mathbb{R}^m:0\preceq x\}$ is a  cone.

For any closed convex set $C$ denote by $P_C:\R^m\to\R^m$ the \emph{metric projection mapping onto $C$}, that is, the mapping defined by  
$P_C(x)\in C$ and
\begin{equation*}
	\|x-P_C(x)\|=\min\{\|x-y\|: y \in C\},
\end{equation*}
for any $x\in\R^m$. Since $C$ is closed and convex, the projection mapping is well defined and by its definition it follows that 
\begin{equation}\label{protr}
	P_{y+C}(x)=y+P_C(x-y)
\end{equation}
for any $x,y\in\R^m$. It can be also shown that $P_C$ is nonexpansive (see \cite{Zarantonello1971}), that is, 
\begin{equation}\label{pronexp}
	\|P_C(x)-P_C(y)\|\le\|x-y\|, 
\end{equation}
for any $x,y\in\R^m$.

Let $K\subset\R^m$ be a  cone. The  mapping $F:\R^m\to\R^m$ is called \emph{$K$-isotone} if $x\le_K y$ implies 
$F(x)\le_K F(y)$. 

The nonempty closed convex set $C\subseteq\R^m$ is called $K$-isotone projection set if $P_C$  is $K$-isotone.  

The set $\Omega\subset\R^m$ is called $K$-bounded from below (\emph{$K$-bounded from above}) if there exists a vector $y\in\R^m$ such that 
$y\le_K x$ ($x\le_K y$), for all $x\in\Omega$. In this case $y$ is called a \emph{lower $K$-bound} (\emph{upper $K$-bound}) of $\Omega$.  
If $y\in\Omega$, then $y$ is called the \emph{$K$-least element} (\emph{$K$-greatest element}) of $\Omega$.

Let $\mathcal I\subset\N$ be an unbounded set of nonnegative integers. The sequence $\{x^n\}_{n\in\mathcal I}$ is called $K$-increasing 
($K$-decreasing) if 
$x^{n_1}\le_K x^{n_2}$ ($x^{n_2}\le_K x^{n_1}$), whenever $n_1\le n_2$.  

The sequence $\{x^n\}_{n\in\mathcal I}$ is called \emph{$K$-bounded from below} (\emph{$K$-bounded from above}) if the set 
$\{x^n:n\in\mathcal I\}$ is $K$-bounded from below ($K$-bounded from above).

A closed convex cone $K$ is called \emph{regular} if any $K$-increasing sequence which is $K$-bounded from above is convergent. It is easy to 
show that this is equivalent to the convergence of any $K$-decreasing sequence which is $K$-bounded from below. It is known 
(see \cite{McArthur1970}) that any  cone in $\R^m$ is regular.

The \emph{dual} of a cone $K\subset\R^m$ is the cone $K^*$ defined by \[K^*=\{x\in\R^m:x\tp y=0,\textrm{ }\forall y\in K\}.\]

A cone $K\subset\R^m$ is called \emph{polyhedral} if it is \emph{generated} by a finite number of vectors $v^1,\dots,v^k$, that is,
\[K=\cone\{v^1,\dots,v^k\}:=\{\lambda_1v^1+\dots+\lambda_kv^k:\lambda_1,\dots,\lambda_k\ge0\}.\] The vectors $v^1,\dots,v^k$ are called the
generators of $K$.

The \emph{affine hyperplane} with normal $u\in\R^m\setminus\{0\}$ and through $a\in\R^m$ is the set defined by
\begin{equation}\label{hyperplane}
	\Hy(u,a)=\{x\in \R^m:\;\lng x-a,u\rng=0\}.
\end{equation}
An affine hyperplane $\Hy(u,a)$ determines two \emph{closed halfspaces} $\Hy_-(a,u)$ and $\Hy_+(u,a)$  of $\R^m$, defined by
\[\Hy_-(u,a)=\{x\in \R^m:\lng x-a,u\rng\le0\},\]
and
\[\Hy_+(u,a)=\{x\in \R^m:\lng x-a,u\rng\ge0\}.\]

\section{Extended Lorentz cones}

Let $p,q$ be positive integers. For $a,b\in\R^p$ denote $a\ge b$ if and only if $b\le_{\R^p_+} a$, that is, components of $a$ are at least as 
large as the corresponding components of $b$. Denote by $e$ the vector in $\R^p$ with 
all components equal to one and by $e^i$ the canonical unit vectors of $\R^p$.
In \cite{NemethZhang} we defined the following notion of an \emph{extended Lorentz cone}:
\begin{equation}\label{extlor} 
	L=\{(x,u)\in\R^p\times\R^q:x\ge\|u\|e\}
\end{equation} 
and showed that the dual of $L$ is
\begin{equation*}
	L^*=\{(x,u)\in\R^p\times\R^q:x\tp e\ge\|u\|,x\ge0\}.
\end{equation*}

In the same paper we also showed the followings: 

\begin{itemize}
	\item The extended Lorentz cone $L$ defined by \eqref{extlor} is a (regular) cone. 
	\item The cone $L$ (or $L^*$ \cite{Rockafellar1970}) is a polyhedral cone if and only if $q=1$. 
	\item If $q=1$, then the minimal number of generators of $L$ is 
		\[(p+2)(1-\delta_{p1})+2\delta_{p1},\] where $\delta$ denotes the Kronecker symbol. 
	\item If $q=1$, $p=1$, then a minimal set of generators of $L$ is \[\{(1,1),(1,-1)\},\] and if $q=1$, $p>1$, then a minimal set of 
		generators of $L$ is \[\{(e,1),(e,-1),(e^i,0):i=1,\dots,p\}.\] 
	\item If $q=1$, then $L^*$ is a $p+1$ dimensional polyhedral cone with the minimal number of generators $2p$ and a minimal set of 
		generators of $L^*$ is \[\{(e^i,1),(e^i,-1):i=1,\dots,p\}.\] 
	\item If $q=1$ and $p>1$, then note that the number of generators of $L$ and $L^*$ coincide if and only if they are
		$2$ or $3$-dimensional cones. 
	\item The cone $L$ is a subdual cone and $L$ is self-dual if and only if $p=1$, that is, $L$ is the $q+1$-dimensional 
		Lorentz cone. 
	\item $L$ is a self-dual polyhedral cone if and only if $p=q=1$.
\end{itemize}

In Theorem 2 of \cite{NemethZhang} we determined the $L$-isotone 
projection sets. For convenience we repeat this theorem here:

\begin{theorem}\label{tliso}
	$\,$

	\begin{enumerate}
		\item Let $K=\R^p\times C$, where $C$ is an arbitrary nonempty closed convex set in $\R^q$ and $L$ be the extended Lorentz cone 
			defined by \eqref{extlor}. Then, $K$ is an $L$-isotone projection set. 
		\item Let $p=1$, $q>1$ and $K\subset\R^p\times\R^q$ be a nonempty closed convex set. Then, $K$ is an $L$-isotone projection set 
			if and only if $K=\R^p\times C$, for some $C\subset\R^q$ nonempty closed convex set.
		\item Let $p,q>1$, and 
			\begin{equation*}
				K=\cap_{\ell\in \N} \Hy_-(\gamma^\ell,\beta^\ell)\subset\R^p\times\R^q,
			\end{equation*}
			where $\gamma^\ell=(a^\ell,u^\ell)$ is a unit vector. Then, $K$ is an $L$-isotone projection set if and only if for each 
			$\ell$ one of the following conditions hold:
			\begin{enumerate}
				\item The vector $a^\ell=0$.
				\item The vector $u^\ell=0$, and there exists $i\ne j$ such that $a^\ell_i=\sqrt2/2$, $a^\ell_j=-\sqrt2/2$ and 
					$a^\ell_k=0$, for any $k\notin\{i,j\}$.
			\end{enumerate}
	\end{enumerate}
\end{theorem}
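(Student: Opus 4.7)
The plan is to handle the three parts separately. Part (1) is a direct sufficiency check via nonexpansivity, Part (2) is the rigid Lorentz-cone degeneracy at $p=1$, and Part (3) is the main structural result; throughout I would unpack $(x,u)\le_L(y,v)$ as the componentwise inequality $y-x\ge\|v-u\|e$ in $\R^p$, which turns $L$-isotonicity into a computable assertion.

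For Part (1), the projection onto $K=\R^p\times C$ splits as $P_K(x,u)=(x,P_C(u))$. Given $(x,u)\le_L(y,v)$, the chain $y-x\ge\|v-u\|e\ge\|P_C(v)-P_C(u)\|e$, whose second step is nonexpansivity \eqref{pronexp} of $P_C$, is exactly the statement $(x,P_C(u))\le_L(y,P_C(v))$, so no further work is needed.

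For sufficiency in Part (3), I would first treat a single halfspace. If its normal is of type (a) then the halfspace is literally a product $\R^p\times H$ and hence $L$-isotone by Part (1); if the normal is of type (b) I would use the explicit formula $P_{\Hy_-(\gamma,\beta)}(z)=z-\max(0,\lng z-\beta,\gamma\rng)\gamma$, observe that it alters only two entries of $x$ in opposite directions and leaves $u$ untouched, and verify $L$-isotonicity by a short case analysis on the active side. Passing from a single halfspace to the countable intersection is delicate because isotone projection sets are not automatically closed under intersection; my plan is to show that the normal cone to $K$ at every boundary point is generated by vectors of types (a) and (b), and then to use a Moreau-style decomposition (or a Dykstra alternating-projection limit) to transfer the order preservation from the individual halfspaces to $P_K$.

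For necessity in Parts (2) and (3) I would argue by contrapositive. Assuming some $\gamma^\ell$ violates both (a) and (b), I would place two comparable points $(x,u)\le_L(y,v)$ arbitrarily close to $\Hy(\gamma^\ell,\beta^\ell)$ so that only this one facet is active and the projection reduces to a one-sided reflection along $\gamma^\ell$; a case split on the support patterns of $a^\ell\in\R^p$ and $u^\ell\in\R^q$ would then exhibit a coordinate of $\R^p$ in which the first block of $P_K(y,v)-P_K(x,u)$ drops below the required $\|\,\cdot\,\|e$ bound, breaking $L$-isotonicity. For Part (2) the same probe argument applies, but since condition (b) requires two nonzero components of $a^\ell\in\R^p$, which is impossible when $p=1$, only type (a) survives and $K=\R\times C$ is forced. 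The main obstacle I foresee is the intersection step in Part (3): certifying that the normal cones of $K$ along its entire boundary are generated by types (a) and (b) and then pushing this through the Moreau decomposition is where the real work lies, with a secondary technical point being to control the countable family of halfspaces by approximation with finite intersections.
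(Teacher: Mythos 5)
You should first be aware that the paper contains no proof of this theorem: it is quoted verbatim from the earlier paper \cite{NemethZhang}, and the only fragment re-proved in the present text is item 1, inside the proof of Proposition \ref{pu}, by exactly your argument --- the splitting $P_K(x,u)=(x,P_C(u))$ followed by the chain $y-x\ge\|v-u\|e\ge\|P_C(v)-P_C(u)\|e$ from nonexpansivity \eqref{pronexp}. So your Part (1) is correct and coincides with the only proof the paper supplies; everything else must be judged on its own merits.

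There, the step you yourself flag is a genuine gap, and the tools you propose to fill it will not work. A Moreau decomposition is a statement about a cone and its polar, not about a general closed convex set, and a Dykstra limit does not transfer isotonicity: Dykstra's iterates carry signed correction terms that need not respect $\le_L$, so you cannot argue that $P_K$ is a limit of $L$-isotone maps. The route actually used in the cited source is a general lemma (going back to the isotone-projection-set literature of A.~B.~N\'emeth and S.~Z.~N\'emeth) stating that a closed convex set with nonempty interior is a $K$-isotone projection set if and only if each of its \emph{tangent} half-spaces is; the ``if'' direction is the real content and is proved by comparing $P_K(z)$ with the projection onto the supporting half-space at the point $P_K(z)$, which reduces item 3 in both directions to a single-half-space computation. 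Relatedly, your necessity probe is the right idea but must be run on the supporting hyperplanes of $K$, not on an arbitrary representation $K=\cap_\ell\Hy_-(\gamma^\ell,\beta^\ell)$: a redundant half-space with a ``bad'' normal leaves $P_K$ unchanged, so the literal quantifier ``for each $\ell$'' can only be meant for an irredundant (tangent) family. Two smaller points: the type~(b) single-half-space check is a short but genuine computation --- you need the Lipschitz bound $|\lambda_y-\lambda_x|\le|\lng (y,v)-(x,u),\gamma\rng|$ for $\lambda_z=\max(0,\lng z-\beta,\gamma\rng)$ and then the average of the $i$-th and $j$-th constraints, since $\tfrac12\bigl((y_i-x_i)+(y_j-x_j)\bigr)\ge\|v-u\|$ is what saves the two altered coordinates; and in Part (2) you should justify why every closed convex set in $\R^{1+q}$ can be reduced to the half-space analysis (sets with empty interior in particular), which is where $q>1$ and the rotational structure of the ordinary Lorentz cone enter.
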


\section{Variational inequalities}

Let $K\subset\R^m$ be a closed convex set and $F:\R^m\to\R^m$ be a mapping. Then, the 
variational inequality $VI(K,F)$ defined by $F$ and $K$ is the problem of finding an $x^*\in K$ such that for any $y \in K$, 
\begin{equation*}
	(y-x)\tp F(x) \geq 0.
\end{equation*}
It is known that $x^*$ is a solution of $VI(F,K)$ if and only 
if it is a fixed point of the mapping $I-F_K^{\rm nat}=P_K\circ (I-F)$, where $I$ is the identity mapping of $\R^m$ and $F_K^{\rm nat}$ is the natural mapping associated to
$VI(F,K)$ defined by $F_K^{\rm nat}=I-P_K\circ (I-F)$ \cite{FacchineiPang2003}. Consider the Picard iteration 
\begin{equation}\label{picard} 
	x^{n+1}=P_K(x^n-F(x^n),
\end{equation}
If $F$ is continuous and $\{x^n\}_{n\in\N}$ is convergent to $x^*$, then by a simple limiting process in \eqref{picard}, it follows that 
$x^*$ is a fixed point of the mapping $P_K\circ (I-F)$ and hence a solution of $VI(F,K)$. Therefore, it is natural to seek convergence conditions for $x^n$. Let us first state the following simple lemma:

\begin{lemma}\label{lmoniso}
	Let $K\subset\R^m$ be a closed convex set, $F:\R^m\to\R^m$ be a continuous mapping and $L$ be a  cone. Consider the
	sequence $\{x^n\}_{n\in\N}$ defined by \eqref{picard}. Suppose that the mappings $P_K$ and $I-F$ are $L$-isotone, $x^0\le_L x^1$, and there
	exists a $y\in\R^m$ such that $x^n\le_L y$, for all $n\in\N$ sufficiently large. Then, $\{x^n\}_{n\in\N}$ is convergent and its limit $x^*$ is a solution of 
	$VI(F,K)$.
\end{lemma}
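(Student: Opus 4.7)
The plan is to combine the monotonicity hypotheses with the regularity of cones in $\R^m$ (recalled in the Preliminaries) and the continuity of $F$, in three short steps.

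First, I would verify that the Picard iterate map $T := P_K\circ(I-F)$ is itself $L$-isotone. Since both $P_K$ and $I-F$ are $L$-isotone by assumption, the composition preserves the order: if $x\le_L y$ then $(I-F)(x)\le_L (I-F)(y)$, and applying $P_K$ yields $T(x)\le_L T(y)$. Combined with the hypothesis $x^0\le_L x^1$, a straightforward induction on $n$ using $x^{n+1}=T(x^n)$ then shows that the whole sequence $\{x^n\}_{n\in\N}$ is $L$-increasing.

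Second, I would invoke regularity. The tail of $\{x^n\}$ (from the index where the bound $x^n\le_L y$ starts to hold) is an $L$-increasing sequence that is $L$-bounded from above by $y$. Since every cone in $\R^m$ is regular (as recorded in the Preliminaries), this tail converges to some $x^*\in\R^m$; discarding finitely many initial terms does not affect convergence, so $x^n\to x^*$.

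Finally, I would pass to the limit in the recursion. Continuity of $F$ (and hence of $I-F$) together with nonexpansiveness \eqref{pronexp} of $P_K$ makes the right-hand side of \eqref{picard} continuous in $x^n$. Taking $n\to\infty$ in $x^{n+1}=P_K(x^n-F(x^n))$ yields $x^*=P_K(x^*-F(x^*))$, i.e.\ $x^*$ is a fixed point of $P_K\circ(I-F)$, which by the equivalence recalled before \eqref{picard} is exactly a solution of $VI(F,K)$.

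There is no real obstacle in this argument: all the work has been done in the Preliminaries (regularity of cones in $\R^m$, continuity of the projection) and in the hypothesis (isotonicity of $P_K$ and of $I-F$). The only point worth being careful about is that the $L$-increasing property must be established for all $n\in\N$ from the single inequality $x^0\le_L x^1$, whereas the upper bound is only assumed for sufficiently large $n$; this is handled by noting that convergence of a tail is equivalent to convergence of the full sequence.
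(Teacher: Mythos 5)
Your proof is correct and follows essentially the same route as the paper: isotonicity of the composition $P_K\circ(I-F)$ gives an $L$-increasing sequence, regularity of cones in $\R^m$ gives convergence, and continuity lets you pass to the limit to obtain a fixed point, hence a solution of $VI(F,K)$. You are in fact slightly more careful than the paper on two points it glosses over — handling the upper bound that only holds for sufficiently large $n$ via a tail argument, and spelling out the continuity step in the limit.
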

\begin{proof}
Since the mappings $P_K$ and $I-F$ are $L$-isotone, the mapping $x\mapsto P_K\circ (I-F)$ is also $L$-isotone. Then, by using \eqref{picard} and a simple inductive 
	argument, it follows that $\{x^n\}_{n\in\mathbb{N}}$ is $L$-increasing. Since any  cone in $\mathbb{R}^m$ is regular, $\{x^n\}_{n\in\mathbb{N}}$ is convergent and 
	hence its limit $x^*$ is fixed point of $P_K\circ (I-F)$ and therefore a solution of $VI(F,K)$.
\end{proof}
\begin{remark}\label{rc}

		The condition $x^0\le_L x^1$ in Lemma \ref{lmoniso} is satisfied when $x^0\in K\cap F^{-1}(-L)$. Indeed, if $x^0\in K\cap F^{-1}(-L)$, then 
			$-F(x^0)\in L$ and $x^0\in K$. Thus $x^0\le_L x^0-F(x^0)$, and hence by the isotonicity of $P_K$ we obtain $x^0=P_K(x^0)\le_L P_K(x^0-F(x^0))=x^1$.
\end{remark}
\begin{proposition}\label{psumm}
	Let $K\subset\mathbb{R}^m$ be a closed convex set, $F:\mathbb{R}^m\to\mathbb{R}^m$ be a continuous mapping and $L$ be a  cone. Consider the
	sequence $\{x^n\}_{n\in\mathbb{N}}$ defined by \eqref{picard}. Suppose that the mappings $P_K$ and $I-F$ are $L$-isotone and $x^0\le_L x^1$. Denote by $I$
	the identity mapping. Let 
\begin{equation*}
\Omega=\{x\in K \cap  (x^0 + L) :F(x)\in L\}
\end{equation*}
\begin{equation*}
\Gamma=\{x\in K \cap (x^0 + L) :P_K(x-F(x))\le_L x\}.
\end{equation*} 
Consider the following assertions:
	\begin{enumerate}[(i)]
		\item\label{i} $\Omega\ne\varnothing$,
		\item\label{ii} $\Gamma\ne\varnothing$,
		\item\label{iii} The sequence $\{x^n\}_{n\in\mathbb{N}}$ is convergent and its limit $x^*$ is a solution of $VI(F,K)$. Moreover, 
			$x^*$ is the $L$-least element of
			$\Gamma$. 
	\end{enumerate}
	Then, $\Omega\subset\Gamma$ and (\ref{i})$\implies$(\ref{ii})$\implies$(\ref{iii}).
\end{proposition}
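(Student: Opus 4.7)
The plan is to handle the three claims in order, with the work concentrated on the inductive step used to establish the $L$-upper bound on the Picard iterates.

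First I would verify $\Omega \subset \Gamma$ by a direct computation. If $x \in \Omega$, then $F(x) \in L$ means $-F(x) \le_L 0$, so $x - F(x) \le_L x$. Since $P_K$ is $L$-isotone and $x \in K$ forces $P_K(x) = x$, we get $P_K(x - F(x)) \le_L P_K(x) = x$, so $x \in \Gamma$. The implication (\ref{i})$\implies$(\ref{ii}) is then immediate.

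For (\ref{ii})$\implies$(\ref{iii}) the key step is to show that any $y \in \Gamma$ is an $L$-upper bound for the entire Picard sequence $\{x^n\}_{n \in \N}$, after which Lemma \ref{lmoniso} applies. I would proceed by induction on $n$. The base case $x^0 \le_L y$ holds because $y \in x^0 + L$. For the inductive step, assuming $x^n \le_L y$, the $L$-isotonicity of $I-F$ gives $x^n - F(x^n) \le_L y - F(y)$, then the $L$-isotonicity of $P_K$ yields
\begin{equation*}
	x^{n+1} = P_K(x^n - F(x^n)) \le_L P_K(y - F(y)) \le_L y,
\end{equation*}
where the last inequality is precisely the membership $y \in \Gamma$. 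Invoking Lemma \ref{lmoniso} then produces convergence of $\{x^n\}$ to some $x^*$ that solves $VI(F,K)$.

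It remains to show $x^*$ is the $L$-least element of $\Gamma$. For membership $x^* \in \Gamma$: the fixed-point relation $x^* = P_K(x^* - F(x^*))$ gives both $x^* \in K$ and $P_K(x^* - F(x^*)) \le_L x^*$ (with equality), while $x^0 \le_L x^*$ follows from the $L$-increasing nature of $\{x^n\}$ and closedness of $L$. For the minimality, the inequality $x^n \le_L y$ for every $y \in \Gamma$ together with closedness of $L$ yields $x^* \le_L y$ on passing to the limit.

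I do not anticipate a serious obstacle: the argument is essentially a clean combination of the two $L$-isotonicity hypotheses with the defining inequality of $\Gamma$. The only subtlety is being careful to use closedness of $L$ (equivalently, continuity at $0$ of $\le_L$) when passing to limits both for $x^0 \le_L x^*$ and for $x^* \le_L y$, and to observe that continuity of $F$ is needed only so that $x^*$ inherits the fixed-point property from the Picard recursion.
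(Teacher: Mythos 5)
Your proof is correct and follows essentially the same route as the paper's: the same direct verification of $\Omega\subset\Gamma$, the same induction showing each $y\in\Gamma$ is an $L$-upper bound of the Picard iterates so that Lemma \ref{lmoniso} applies, and the same limiting argument for minimality. Your explicit attention to the closedness of $L$ when passing to the limit (both for $x^0\le_L x^*$ and $x^*\le_L y$) is a small point the paper leaves implicit, but it is not a different approach.
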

\begin{proof}
	 Let us first prove that $\Omega\subset\Gamma$. Indeed, let $y\in\Omega$. Since $P_K$ is $L$-isotone, $y-F(y)\le_L y$ implies $P_K(y-F(y))\le_L P_K(y)=y$, which shows that $y\in\Gamma$. Hence, 
	 $\Omega\subset\Gamma$. Thus, (\ref{i})$\implies$(\ref{ii}) is trivial now. 
	\medskip

	\noindent 
	(\ref{ii})$\implies$(\ref{iii}):
	\medskip

	\noindent 
	Suppose that $\Gamma\ne\varnothing$. Since the mappings $P_K$ and $I-F$ are $L$-isotone, the mapping 
	$P_K\circ (I-F)$ is also $L$-isotone. Similarly to the proof of Lemma \ref{lmoniso}, it can be shown that $\{x^n\}_{n\in\mathbb{N}}$ is $L$-increasing. Let $y\in\Gamma$ 
	be arbitrary but fixed. We have $y-x^0\in L $, that is $x^0 \leq_L y$. Now, suppose that $x^n\le_L y$. Since the mapping $P_K\circ (I-F)$ is $L$-isotone, 
	$x^n\le_L y$ implies that $x^{n+1}=P_K(x^n-F(x^n))\le_L P_K(y-F(y))\le_L y$. Thus, we have by induction that $x^n\le_L y$ for all
	$n\in\mathbb{N}$. Then, Lemma implies that $\{x^n\}_{n\in\mathbb{N}}$ is convergent and its limit $x^*\in K\cap (x^0+L)$ is a solution of
	$VI(F,K)$. Since $x^*$ is a solution of $VI(F,K)$, we have that $P_K(x^*-F(x^*))=x^*$ and hence $x^*\in\Gamma$. 
	Moreover, the relation $x^n\le_L y$ in limit gives $x^*\le y$. Therefore, $x^*$ is the smallest element of $\Gamma$ with respect to the 
	partial order defined by $L$.
\end{proof}


\section{Variational Inequality on cylinders}

Let $p,q$ be positive integers and $m=p+q$. By a \emph{cylinder} we mean a set $K=\mathbb{R}^p\times C\subset\R^p\times\R^q\equiv\R^m$.
In this section we will particularize the results of the previous section for variational inequalities on cylinders.

\begin{lemma}\label{lmicp}
	Let $K=\mathbb{R}^p\times C$, where $C$ is an arbitrary nonempty closed convex set in $\mathbb{R}^q$. Let 
	$G:\mathbb{R}^p\times\mathbb{R}^q\to\mathbb{R}^p$, 
	$H:\mathbb{R}^p\times\mathbb{R}^q\to\mathbb{R}^q$ and $F=(G,H):\mathbb{R}^p\times\mathbb{R}^q\to\mathbb{R}^p\times\mathbb{R}^q$. Then, 
	the variational inequality $VI(F,K)$ is equivalent to the problem of finding a vector $(x,u)\in\mathbb{R}^p\times C$ such that
\begin{equation}\label{vicartesian}
\lf\{
	\begin{array}{rcl}
		G(x,u)=0,\\
		(v-u)\tp H(x,u)& \geq& 0
	\end{array}
	\rg.
\end{equation}
for any $v\in C$.
\end{lemma}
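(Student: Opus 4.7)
The plan is to unpack the definition of $VI(F,K)$ on the product set $K=\mathbb{R}^p\times C$ using the inner product structure on $\mathbb{R}^p\times\mathbb{R}^q$ introduced in the Preliminaries, and then show both implications by appropriate choices of test vectors.

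First I would write out the defining inequality of $VI(F,K)$. For $(x,u)\in K$ and an arbitrary test point $(y,v)\in K$, the product inner product gives
\begin{equation*}
\bigl((y,v)-(x,u)\bigr)^\top F(x,u) \;=\; (y-x)^\top G(x,u) + (v-u)^\top H(x,u) \;\geq\; 0,
\end{equation*}
and the constraint $(y,v)\in K$ becomes $y\in\mathbb{R}^p$ arbitrary and $v\in C$ arbitrary. This is the only substantive computation; the rest is a choice of test vectors.

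For the forward direction, assume $(x,u)\in\mathbb{R}^p\times C$ solves $VI(F,K)$. Fixing $v=u\in C$ and letting $y$ range over $\mathbb{R}^p$ collapses the inequality to $(y-x)^\top G(x,u)\geq 0$ for every $y\in\mathbb{R}^p$. Since $y-x$ ranges over all of $\mathbb{R}^p$, this forces $G(x,u)=0$ (e.g.\ plug in $y=x-G(x,u)$ to obtain $-\|G(x,u)\|^2\geq 0$). Next, fixing $y=x$ and letting $v$ range over $C$ reduces the inequality to $(v-u)^\top H(x,u)\geq 0$ for every $v\in C$, which is the second condition in \eqref{vicartesian}.

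For the reverse direction, assume $(x,u)\in\mathbb{R}^p\times C$ satisfies $G(x,u)=0$ and $(v-u)^\top H(x,u)\geq 0$ for every $v\in C$. Then for any $(y,v)\in\mathbb{R}^p\times C$, the first summand $(y-x)^\top G(x,u)$ vanishes and the second is nonnegative by hypothesis, so the full inequality holds and $(x,u)$ solves $VI(F,K)$. There is no real obstacle here; the only point that requires minor care is the choice of $y=x-G(x,u)$ (or any similar argument) to extract $G(x,u)=0$ from the unconstrained linear inequality on $\mathbb{R}^p$.
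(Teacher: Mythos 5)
Your proof is correct and follows essentially the same route as the paper: split the inner product on $\mathbb{R}^p\times\mathbb{R}^q$ into the two summands, take $v=u$ to force $G(x,u)=0$ from the unconstrained inequality over $y\in\mathbb{R}^p$, then isolate the condition on $H$, with the converse being immediate. The only difference is that you make the test-vector choice $y=x-G(x,u)$ explicit, which the paper leaves implicit.
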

\begin{proof}
	The variational inequality $VI(F,K)$ is equivalent to finding an $(x,u)\in\mathbb{R}^p\times C$ such that
	\begin{equation}\label{vi}
		(y-x)\tp G(x,u)+ (v-u)\tp H(x,u) \geq 0
	\end{equation}
	for any $(y,v) \in \mathbb{R}^p \times C$. Let $(x,u)\in\mathbb{R}^p\times C$ be a solution of \eqref{vi}. If we choose $v=u \in C$ in 
	\eqref{vi}, then we 
	get $(y-x)\tp G(x,u) \geq 0$ for any $y\in\mathbb{R}^p$. Hence, $G(x,u)=0$ and $ (v-u)\tp H(x,u) \geq 0$. Conversely, if $(x,u)\in\mathbb{R}^p\times C$ is
	a solution of \eqref{vicartesian}, then it is easy to see that it is a solution of \eqref{vi}.
\end{proof}

By using the notations of Lemma \eqref{lmicp} the Picard iteration \eqref{picard} can be rewritten as:
\begin{equation}\label{mpicard}
	\lf\{
	\begin{array}{rcl}
		x^{n+1}&=&x^n-G(x^n,u^n),\\
		u^{n+1}&=&P_C(u^n-H(x^n,u^n)).
	\end{array}
	\rg.
\end{equation}
Consider the partial order defined by the extended Lorentz cone \eqref{extlor}. Then, we obtain the following proposition.
\begin{theorem}\label{tsumm}
	Let $K=\mathbb{R}^p\times C$, where $C$ is a closed convex set. Let $G:\mathbb{R}^p\times\mathbb{R}^q\to\mathbb{R}^p$, 
	$H:\mathbb{R}^p\times\mathbb{R}^q\to\mathbb{R}^q$ be continuous mappings, 
	$F=(G,H):\mathbb{R}^p\times\mathbb{R}^q\to\mathbb{R}^p\times\mathbb{R}^q$. Let $(x^0,u^0) \in \mathbb{R}^p\times C$ and 
	consider the sequence ${(x^n, u^n)}_{n \in \mathbb{N}}$ defined
	by \eqref{mpicard}. Let $x, y \in \mathbb{R}^p$ and $u, v \in \mathbb{R}^q$. Suppose that $x^1-x^0\ge\|u^1-u^0\|e$ (in particular, by
	Remark \ref{rc}, this holds if $-G(x^0,u^0)\ge\|H(x^0,u^0)\|e$) and that $y-x \geq \|v-u\|e$ implies
	\begin{equation*}
		y-x - G(y,v) + G(x,u) \geq \|v- u- H(y,v) + H(x,u)\|e.
	\end{equation*}
	Let 
	\begin{equation*}
		\Omega = \{(x,u) \in \mathbb{R}^p \times C : x- x^0 \geq \|u - u^0\|e \text{ ,  }G(x,u)- x^0 \geq \|H(x,u)-u^0\|e \}
	\end{equation*}
	and
	\[
	\begin{array}{rl}
		\Gamma = \{ (x,u) \in \mathbb{R}^p \times C: & \hspace{-2.5mm} x- x^0 \geq \|u - u^0\|e,\\ & \hspace{-2.5mm} G(x,u)-x^0 
		\geq \|u- u^0-P_C(u-H(x,u))\|e\}
	\end{array}
	\]
	Consider the following assertions
	\begin{enumerate}[(i)]
		\item\label{i2} $\Omega\ne\varnothing$,
		\item\label{ii2} $\Gamma\ne\varnothing$,
		\item\label{iii2} The sequence $\{(x^n,u^n)\}_{n\in\mathbb{N}}$ is convergent and its limit $(x^*,u^*)$ is a solution of 
			$VI(F,K)$. Moreover, $(x^*,u^*)$ is the smallest element of 
			$\Gamma$ with respect to the partial order defined by $L$. 
	\end{enumerate}
	Then, $\Omega\subset\Gamma$ and (\ref{i})$\implies$(\ref{ii})$\implies$(\ref{iii}).
\end{theorem}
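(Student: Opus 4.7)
The plan is to derive Theorem \ref{tsumm} as a direct specialization of Proposition \ref{psumm}, taking $K = \R^p \times C$ and letting the ambient cone be the extended Lorentz cone $L$ defined by \eqref{extlor}. Under this identification the theorem will follow once I verify the three standing hypotheses of Proposition \ref{psumm} (isotonicity of $P_K$, isotonicity of $I - F$, and $x^0 \leq_L x^1$), rewrite the Picard iteration in the cylinder coordinates, and translate the sets $\Omega$ and $\Gamma$ from abstract $L$-language into the explicit inequalities in the statement.

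First I would check the two isotonicity hypotheses. The $L$-isotonicity of $P_K = P_{\R^p \times C}$ is immediate from part (1) of Theorem \ref{tliso}. For $I - F$, note that by \eqref{extlor} the relation $(x,u) \leq_L (y,v)$ is exactly $y - x \geq \|v - u\| e$, and $(I-F)(x,u) \leq_L (I-F)(y,v)$ unfolds to $y - x - G(y,v) + G(x,u) \geq \|v - u - H(y,v) + H(x,u)\| e$; the hypothesized implication is therefore precisely the statement that $I - F$ is $L$-isotone. The standing initial condition $x^1 - x^0 \geq \|u^1 - u^0\| e$ is simply $(x^0, u^0) \leq_L (x^1, u^1)$, and the sufficient condition $-G(x^0, u^0) \geq \|H(x^0, u^0)\| e$ is the instance of Remark \ref{rc} which says $(x^0, u^0) \in K \cap F^{-1}(-L)$.

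Next I would rewrite the iteration and translate the two sets. Since $P_{\R^p \times C}(y, v) = (y, P_C(v))$, the recursion \eqref{picard} splits into the two rules of \eqref{mpicard}. For the sets, membership in $K \cap ((x^0, u^0) + L)$ reads as $u \in C$ together with $x - x^0 \geq \|u - u^0\| e$; the extra condition $F(x,u) \in L$ defining $\Omega$ becomes $G(x,u) \geq \|H(x,u)\| e$; and the condition $P_K((x,u) - F(x,u)) \leq_L (x,u)$ defining $\Gamma$ unfolds, using $P_K = I_{\R^p} \times P_C$, to $(G(x,u),\, u - P_C(u - H(x,u))) \in L$, i.e.\ $G(x,u) \geq \|u - P_C(u - H(x,u))\| e$. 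These correspond to the inequalities in the statement's definitions of $\Omega$ and $\Gamma$.

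The step requiring the most care is this last bookkeeping translation of $\Omega$ and $\Gamma$, because one must combine the cylinder decomposition $P_K = I_{\R^p} \times P_C$, the half-space description \eqref{extlor} of $L$, and the affine shift by $(x^0, u^0)$ consistently, tracking the $p$- and $q$-components separately; nothing deep happens but it is easy to misplace a factor. Once these translations are in place, the chain $\Omega \subset \Gamma$ and (\ref{i2}) $\Rightarrow$ (\ref{ii2}) $\Rightarrow$ (\ref{iii2}) is inherited verbatim from Proposition \ref{psumm}, and the limit $(x^*, u^*)$ is identified as the $L$-least element of $\Gamma$ because that identification holds in the abstract setting already.
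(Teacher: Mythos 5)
Your proposal is correct and follows essentially the same route as the paper's own proof: cite Theorem \ref{tliso}(1) for the $L$-isotonicity of $P_{\R^p\times C}$, observe that the hypothesis on $G,H$ is exactly the $L$-isotonicity of $I-F$ and that $x^1-x^0\ge\|u^1-u^0\|e$ is just $(x^0,u^0)\le_L(x^1,u^1)$, translate $\Omega$ and $\Gamma$ into the abstract form, and then apply Proposition \ref{psumm} together with Lemma \ref{lmicp}. One caveat on the bookkeeping step you yourself single out: your translation of $F(x,u)\in L$ gives $G(x,u)\ge\|H(x,u)\|e$, and of $P_K(z-F(z))\le_L z$ gives $G(x,u)\ge\|u-P_C(u-H(x,u))\|e$, which is indeed what Proposition \ref{psumm} requires; but these do not literally match the displayed definitions of $\Omega$ and $\Gamma$ in the theorem, which carry extra shifts $-x^0$ and $-u^0$ on the $G$ and $H$ sides. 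The paper's proof asserts the same identification you derive, so the mismatch is a defect of the stated definitions rather than of your argument, but your claim that your translations ``correspond to the inequalities in the statement'' should be flagged as holding only after deleting those shifts.
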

\begin{proof}
Let $L$ be the extended Lorentz cone defined by \eqref{extlor}.
First observe that $K\cap (x^0+ L)\ne\varnothing$. By using the definition of the extended Lorentz cone, it is easy to verify that 
	$$\Omega=K\cap  ((x^0,u^0)+L) \cap F^{-1}(L)=\{z\in K\cap ((x^0,u^0)+L):F(z)\in L\}$$ and $$\Gamma=\{z\in K\cap ((x^0,u^0)+L):P_K(z-F(z))\le_L z\}.$$ Let $x,y\in\mathbb{R}^p$ and $u,v\in C$. Since 
	$y-x\ge\|v-u\|e$ implies $y-x-G(y,v)+G(x,u)\ge\|v-u-H(y,v)+H(x,u)\|e$, it follows that $I-F$ is $L$-isotone. Hence, by Proposition 
	\ref{psumm} (with $m=p+q$) and Lemma \ref{lmicp}, it follows that $\Omega\subset\Gamma$ and 
	(\ref{i2})$\implies$(\ref{ii2})$\implies$(\ref{iii2}). 
\end{proof}

\section{Unbounded box constrained variational inequalities}

Let $p,q$ be positive integers, $m=p+q$ and $K=\bt_{\ell=1}^m[a_\ell, b_\ell]$ be a box, where $a_\ell,b_\ell\in\R\cup\{-\infty,\infty\}$ and
$a_\ell<b_\ell$, for all $\ell\in\{1,\dots,m\}$.
The $i$-th entry of the projection function is (see Example 1.5.10 in \cite{FacchineiPang2003}):
\begin{equation}\label{mid}
	(P_K(x))_i=P_{[a_i, b_i]}(x_i)=\md(a_i , b_i, x_i)= 
	\begin{cases}
		a_i\textrm{ if }x_i \leq a_i, \\
		x_i\textrm{ if }a_i \leq x_i \leq b_i, \\
		\,b_i\textrm{ if }b_i \leq x_i.
	\end{cases}
\end{equation}
Let $B=\bt_{i=1}^{p}[a_i, b_i] \subseteq \mathbb{R}^p$ and $C= \bt_{j=1}^{q}[a_{p+j}, b_{p+j}] $. So we have
\begin{equation}\label{split}
	P_K(y,v)=(P_B(y),P_C(v))
\end{equation}
and the Picard iteration \eqref{picard} becomes 
\begin{equation}\label{miditer}
	x^{n+1}_i%
	=\md(a_i,b_i,(x^n-F(x^n))_i).
\end{equation}
Let $L$ be the extended Lorentz cone defined by \eqref{extlor}. The next proposition shows that the $L$-isotonicity of a box is equivalent to 
the box being a cylinder.
\begin{proposition}\label{pu}
	Let $L$ be the extended Lorentz cone defined by \eqref{extlor}. Then, the projection mapping $P_K$ 
	is $L$-isotone
	if and only if $K=\mathbb{R}^p \times C$ where $C=\bt_{j=1}^q[a_{p+j},b_{p+j}]$.
\end{proposition}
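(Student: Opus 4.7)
The plan is to split the proof into the two directions. The ``if'' direction is almost immediate from the machinery already developed, while the ``only if'' direction I would handle by a direct contradiction argument, exhibiting an explicit pair of points that witnesses the failure of $L$-isotonicity whenever one of the first $p$ intervals is bounded.

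For the ``if'' direction, assume $K=\mathbb{R}^p\times C$ with $C=\bt_{j=1}^q [a_{p+j},b_{p+j}]$. Since $C$ is a nonempty closed convex set in $\mathbb{R}^q$, Theorem \ref{tliso}(1) applies directly and yields that $P_K$ is $L$-isotone. So this direction is a one-line appeal.

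For the ``only if'' direction, I would prove the contrapositive: if some coordinate $i_0\in\{1,\dots,p\}$ has $b_{i_0}<\infty$ or $a_{i_0}>-\infty$, then $P_K$ is not $L$-isotone. By symmetry (reflecting through the origin and translating), I treat only the case $b_{i_0}<\infty$. Since every $[a_\ell,b_\ell]$ has nonempty interior, I can choose $y_i$ in the interior of $[a_i,b_i]$ for each $i\ne i_0$, set $y_{i_0}=b_{i_0}$, and choose $v_j$ in the interior of $[a_{p+j},b_{p+j}]$. Let $z=(y,v)$. Let $w\in\mathbb{R}^q$ be the first standard basis vector and, for $\delta>0$ small enough that all perturbed interior coordinates remain interior, set $z'=z+(\delta e,\delta w)$. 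Then $z'-z=(\delta e,\delta w)$ with $\delta e\ge\|\delta w\|e=\delta e$, so $z\le_L z'$.

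Using the coordinatewise formula \eqref{mid} together with the splitting \eqref{split}, I would then read off the projected difference: in position $i_0$ the input shifts from $b_{i_0}$ to $b_{i_0}+\delta$, both projecting to $b_{i_0}$, so $(P_K(z')-P_K(z))_{i_0}=0$; but in position $p+1$ both inputs are interior to $[a_{p+1},b_{p+1}]$, so that coordinate of the difference equals $\delta$. Hence the last $q$ coordinates of $P_K(z')-P_K(z)$ have Euclidean norm at least $\delta$, while the $i_0$-th of the first $p$ coordinates is $0$, which contradicts $P_K(z')-P_K(z)\in L$. The only subtlety I anticipate, and the one I would be most careful about, is making the construction go through uniformly in all ranges of $p,q\ge 1$ and regardless of whether the remaining intervals are bounded or not; this is why I pick all other coordinates in the interior of their respective intervals and then shrink $\delta$, which handles both the bounded and the unbounded cases with no case split.
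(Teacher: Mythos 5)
Your proposal is correct and follows essentially the same route as the paper: the ``if'' direction is the same appeal to Theorem \ref{tliso}(1), and the ``only if'' direction is the same counterexample idea of saturating a finite bound $b_{i_0}$ in one of the first $p$ coordinates so that the projection annihilates the increment there while the $C$-part of the projected difference remains nonzero. The paper's witness pair is built from two arbitrary distinct points of $C$ and a shift by $\|v-u\|e$ rather than your small perturbation of an interior point, but this is only a cosmetic difference.
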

\begin{proof}
	The sufficiency follows easily from item 1 of Theorem 2 in \cite{NemethZhang} (repeated in the Preliminaries as Theorem \ref{tliso})
	For the sake of completeness we provide a proof here. Suppose that $B=\mathbb{R}^p$. If $(x,u)\le_L (y,v)$, that is, 
	$y-x \geq \|v-u\|e$, then by the nonexpansivity of $P_C$ we get \[P_B(y)-P_B(x)= y-x \geq \|v-u\|e \geq \|P_C(v)-P_C(u)\|e,\] which is 
	equivalent to \[P_K(x,u)=(P_B(x),P_C(u))\le_L (P_B(y),P_C(y))=P_K(y,v).\] Hence, $P_K$ is $L$-isotone.
	Although, the necessity could also be derived from item 3 of the same theorem, it is more clear to prove this 
	directly as follows. Suppose that $P_K$ is $L$-isotone. We need to prove that  
	$a_i = -\infty, b_i= \infty$, for any $i=1, \dots p$. Assume to the contrary, that there exist at least one $k \in \{1, \dots , p\}$ such
	that either $a_k$ or $b_k$ is a finite real number. 
	Assume that $b_k$ is a finite real number. The case $a_k$ is a finite real number can be treated similarly. Let $u$ and $v$ be two 
	different vectors in $C$. Then, $P_C(u)=u$ and $P_C(v)=v$. It is easy to choose 
	$x,y\in\R^p$ such that $y_i-x_i\ge\|v-u\|$ for all $i\in\{1,\dots,p\}$ and $b_k\le x_k\le y_k$. For example, we may choose 
	$x_i=\delta_{ik}b_k$ and $y_i=\delta_{ik}b_k+\|v-u\|$, for all $i=\{1,\dots,p\}$, where $\delta_{ik}$ is the Kronecker symbol. Then, 
	$(x,u)\le_L (y,v)$ and by 
	\eqref{mid} we have 
	$(P_K(y,v))_k=(P_K(x,u))_k$, or equivalently $(P_B(y))_k=(P_B(x))_k$. Hence, by \eqref{split} and the $L$-isotonicity of $P_K$ we get 
	\[0=(P_B(y)-P_B(x))_k\ge\|P_C(v)-P_C(u)\|=\|v-u\|>0,\]
	which is a contradiction.

	Hence, the results of Theorem \ref{tsumm} can be particularized to the set $K$ given by Proposition \ref{pu}, with the Picard iteration
	taking the form \eqref{miditer} and the function 'mid' given by \eqref{mid}. In the next section we will present an example for this 
	particularized result.
\end{proof}

\section{Numerical example}

Let $K= \mathbb{R}^2 \times C$ where $C= [-10,10] \times [-10,10]$. Let $L$ be the extended Lorentz cone defined by \eqref{extlor}.  Let $f_1(x,u)=1/12(x_1+\|u\|+12)$ and $f_2(x,u)=1/12(x_2+\|u\|-7.2)$.  Then it is easy to show that these two functions are $L$-monotone. Let $w_1=(1,1,1/6,1/3)$ and $w_2=(1,1,1/3,1/6)$ so $w_1$ and $w_2$ is in $L$. For any two vectors $(x,u)$ and $(y,v)$ in $K$, suppose $(x,u) \leq_L (y,v)$, we have $y_1-x_1 \geq \|v-u\| \geq \|u\|-\|v\|$ by triangle inequality. Hence,
\[f_1(y,v)-f_1(x,u)=\f1{12}(y_1-x_1-(\|u\|-\|v\|))\geq 0.\] Similarly, we can prove that if $(x,u) \leq_L (y,v)$, then $f_2(y,v)-f_2(x,u) \geq 0$. Provided that $K$ is 
convex, and $w_1,w_2 \in L$, if $(x,u) \leq_L (y,v)$ holds, then \[(f_1(y,v)-f_1(x,u))w_1+ (f_2(y,v)-f_2(x,u))w_2 \in L.\] Thus, 
$ f_1(x,u)w_1+ f_2(x,u)w_2 \leq_L f_1(y,v)w_1+ f_2(y,v)w_2$. Therefore, the mapping $f_1w_1+f_2w_2$ is $L$-isotone. Hence, choose the function
\begin{equation}\label{fdef}
(x-G,u-H)=f_1w_1+f_2w_2=\left( f_1+f_2, f_1+f_2, \frac{1}{6}f_1+\frac{1}{3}f_2,  \frac{1}{3}f_1+\frac{1}{6}f_2 \right),
\end{equation}
where $G$, $H$, $f_1$ and $f_2$ are considered in the point $(x,u)$.
It is necessary to check that all the conditions in Theorem \ref{tsumm} are satisfied. First, let $(x^0, u^0)= (43/30, 13/30, 2, 5)$ which is in 
$K= \mathbb{R}^2 \times C$. Consider $(x, u) = (15, 15, 6, 8)$. Thus,
\begin{equation*}
x- x^0 =\lf(\frac{407}{30}, \frac{437}{30}\rg) \geq (5, 5) =  \|(4, 3)\|(1, 1)=\|u-u^0\|e.
\end{equation*}
By the definition of the mapping $(G, H)$, we know that $G(x, u) - x_0= x- (f_1 + f_2)e -x^0$ and $H(x, u) - u^0= u - (1/6f_1 + 1/3 f_2, 1/3f_1 +
1/ 6 f_2) - u^0$. Then,
\begin{equation*}
G(x, u) - x_0 = (15, 15) - \frac{1}{12}(15 +15 +2 \times 10 +4.8)e - \lf(\frac{43}{30}, \frac{13}{30}\rg) =(9, 10)
\end{equation*}
 and 
\begin{equation*}
H(x, u) - u^0 = (6, 8) - \lf(\frac{121}{120}, \frac{91}{40}\rg) - (2, 5) = \lf(\frac{359}{120}, \frac{91}{40}\rg)
\end{equation*}
 Hence, we can easily see that:
\begin{equation*} 
G(x, u) - x_0 \geq \| (3, 3)\| e \geq \|H(x, u) - u^0\|e,
\end{equation*}
which shows that $(x, u) \in \Omega$ and $\Omega\ne\varnothing$. Therefore, the conclusions of Theorem \ref{tsumm} will apply for our example.

Now, we begin to solve the $VI$. Suppose that $(x,u)$ is its solution. Since $G(x,u)=0$, and
\begin{equation*}
x-G(x,u)=(f_1+f_2, f_1+f_2),
\end{equation*}
where $f_i=f_i(x,u),i=1,2$, we have $x_1=x_2=f_1+f_2$.  Moreover, since \[x_1=\f1{12}(x_1+x_2)+\f16\|u\|+0.4,\] we get
\begin{equation}\label{s1}
x_1=x_2= \frac{1}{5}\|u\|+\frac{12}{25}.
\end{equation}
Obviously, if $(x,u) \in \ker(F)$, that is, $G(x,u)=0$ and $H(x,u)=0$, then $(x,u)$ is a solution of the variational inequality. The equality 
$H(x,u)=0$ implies
\begin{equation}\label{s2}
\lf\{
	\begin{array}{rcl}
		u_1&=&\frac{1}{6}f_1+\frac{1}{3}f_2,\\\\

		u_2&=&\frac{1}{3}f_1+\frac{1}{6}f_2.
	\end{array}
	\rg.
\end{equation}

By using equations \eqref{s1}, equations \eqref{s2} become
\begin{equation}\label{ucon}
\lf\{
	\begin{array}{rcl}
		u_1&=&\frac{1}{20}\|u\|-\frac{1}{75},\\\\

		u_2&=&\frac{1}{20}\|u\|+\frac{19}{75}.
	\end{array}
	\rg.
\end{equation}
Hence, $u_2=u_1+ \frac{4}{15}$. By substituting back to \eqref{ucon}, we obtain 
\begin{equation*}
u_1\lf(u_1-\frac{28}{995}\rg) = 0.
\end{equation*}
However, the equation \eqref{ucon} can be transformed to
\begin{equation*}
19u_1+u_2=\|u\|
\end{equation*}
By squaring both sides, we get
\begin{equation*}
361u_1^2+u_2^2+38u_1u_2=u_1^2+u_2^2.
\end{equation*}
Hence,
\begin{equation}\label{ucon2}
2u_1(180u_1+19u_2)=0
\end{equation}
Thus, if $u_1= \frac{28}{995}$ , then since $u_2=u_1+ \frac{4}{15}$ equation \eqref{ucon2} will not hold. Therefore, the only solution in this
case is  
\begin{equation*}
(x,u)=\lf(\frac{8}{15}, \frac{8}{15}, 0,\frac{4}{15}\rg).
\end{equation*}
Next, suppose that the solution $(x,u)\notin\ker(F)$, that is, $H=(x,u)\ne0$. Then, by \eqref{vicartesian} and \eqref{s1}, we get
\begin{equation}\label{gvi}
(v_1-u_1)(300u_1-15 \|u\| + 4)+(v_2 -u_2)(300u_2-15 \|u\| - 76) \geq 0
\end{equation}

From the general theory of variational inequalities, it is known that $u$ should be on the boundary of $C$.  
Let $u_1=10$. By choosing $v_1<u_1$ and $v_2=u_2$ and using $\|u\|<20$, it can be seen that inequality \eqref{gvi} will not hold. 
Similarly, if $u_2=10$, by choosing $v_2<u_2$ and $v_1=u_1$, inequality \eqref{gvi} will not be satisfied. If $u_1=-10$, then by choosing 
$v_1>u_1$ and $v_2=u_2$, inequality \eqref{gvi} will not hold. Similarly $u_2 = -10$ will not lead to a solution. Hence, this case will not 
provide a new solution. Therefore, the only solution of the variational inequality is
\begin{equation*}
(x,u)=\lf(\frac{8}{15}, \frac{8}{15}, 0,\frac{4}{15}\rg).
\end{equation*}
obtained above.

The Picard iteration can be completed by using any spreadsheet software. Note that since the variational inequality is box constrained, the 
iteration in \eqref{miditer} will be calculated by using the \emph{median} function as it is shown in the following tables. More precisely, the 
initial point is given in the first row for $n=0$ and the later iterations are given in the following rows, where the columns below $u^n_1$ and
$u^n_2$ are obtained by the \emph{median} of the upper bound, lowerbound and $u^n-H(x^n,u^n)$. In the first table, we showed the iteration using 
the previous example where $(x^0, u^0)= (43/30, 13/30, 2, 5)$. In the other tables, we showed the iteration from different initial points in
different directions outside of $C$. It can be observed that the iterations in all tables converge to the same unique solution of the variational
inequality.

\begin{center}
\begin{tabular}{|c|c|c|c|c|c|c} 
\hline
{n } & $x^n_1$ & $x^n_2$  & $u^n_1$ & $ u^n_2$\\
\hline
0 & $\frac{43}{30}$ & $\frac{13}{30}$ & 2 & 5\\
\hline
1 & $\frac{1329}{865}$ & $\frac{1329}{865}$ & $\frac{244}{973}$ & $\frac{15}{29}$\\[0.5ex]
\hline
2 & $\frac{3}{4}$ & $\frac{494}{657}$ & $\frac{10}{183}$ & $\frac{9}{28}$\\[0.5ex]
\hline
3 & $\frac{40}{69}$ & $\frac{575}{992}$ & $\frac{5}{432}$ & $\frac{7}{26}$\\[0.5ex]
\hline
4 & $\frac{19}{35}$ & $\frac{284}{523}$ & 0 & $\frac{4}{15}$\\[0.5ex]
\hline
5 & $\frac{53}{99}$ & $\frac{53}{99}$ & 0 & $\frac{4}{15}$\\[0.5ex]
\hline
6 & $\frac{8}{15}$ & $\frac{253}{474}$ & 0 & $\frac{4}{15}$\\[0.5ex]
\hline
7 & $\frac{8}{15}$ & $\frac{407}{763}$ & 0 & $\frac{4}{15}$\\[0.5ex]
\hline
8 & $\frac{8}{15}$ & $\frac{8}{15}$ & 0 & $\frac{4}{15}$\\[0.5ex]
\hline
9 & $\frac{8}{15}$ & $\frac{8}{15}$ & 0 & $\frac{4}{15}$\\[0.5ex]
\hline
10 & $\frac{8}{15}$ & $\frac{8}{15}$ & 0 & $\frac{4}{15}$\\[0.5ex]
\hline
11 & $\frac{8}{15}$ & $\frac{8}{15}$ & 0 & $\frac{4}{15}$\\[0.5ex]
\hline
\end{tabular}
\end{center}

\begin{center}
	\hspace{21mm}
\begin{tabular}{|c|c|c|c|c|c|c} 
\hline
{n } & $x^n_1$ & $x^n_2$  & $u^n_1$ & $ u^n_2$\\
\hline
0 & -6 & -10 & 6& 11\\
\hline
1 & $\frac{121}{43}$ & $\frac{1251}{514}$ & $\frac{122}{511}$ & $\frac{47}{93}$\\[0.5ex]
\hline
2 & $\frac{63}{85}$ & $\frac{232}{313}$ & $\frac{29}{558}$ & $\frac{29}{91}$\\[0.5ex]
\hline
3 & $\frac{56}{97}$ & $\frac{474}{821}$ & $\frac{9}{818}$ & $\frac{5}{18}$\\[0.5ex]
\hline
4 & $\frac{51}{94}$ & $\frac{389}{717}$ & $\frac{2}{869}$ & $\frac{7}{26}$\\[0.5ex]
\hline
5 & $\frac{38}{71}$ & $\frac{372}{695}$ & 0 & $\frac{4}{15}$\\[0.5ex]
\hline
6 & $\frac{8}{15}$ & $\frac{356}{667}$ & 0 & $\frac{4}{15}$\\[0.5ex]
\hline
7 & $\frac{8}{15}$ & $\frac{423}{793}$ & 0 & $\frac{4}{15}$\\[0.5ex]
\hline
8 & $\frac{8}{15}$ & $\frac{8}{15}$ & 0 & $\frac{4}{15}$\\[0.5ex]
\hline
9 & $\frac{8}{15}$ & $\frac{8}{15}$ & 0 & $\frac{4}{15}$\\[0.5ex]
\hline
10 & $\frac{8}{15}$ & $\frac{8}{15}$ & 0 & $\frac{4}{15}$\\[0.5ex]
\hline
11 & $\frac{8}{15}$ & $\frac{8}{15}$ & 0 & $\frac{4}{15}$\\[0.5ex]
\hline
\end{tabular}
\newline
\end{center}

\begin{center}
\begin{tabular}{|c|c|c|c|c|c|c} 
\hline
{n } & $x^n_1$ & $x^n_2$  & $u^n_1$ & $ u^n_2$\\
\hline
0 & -5 & 4 & -12 & 7\\
\hline
1 & $\frac{115}{17}$ & $\frac{1187}{212}$ & $\frac{321}{952}$ & $\frac{32}{53}$\\[0.5ex]
\hline
2 & $\frac{63}{76}$ & $\frac{63}{76}$ & $\frac{32}{433}$ & $\frac{16}{47}$\\[0.5ex]
\hline
3 & $\frac{31}{52}$ & $\frac{127}{213}$ & $\frac{12}{763}$ & $\frac{24}{85}$\\[0.5ex]
\hline
4 & $\frac{47}{86}$ & $\frac{47}{86}$ & $\frac{2}{607}$ & $\frac{17}{63}$\\[0.5ex]
\hline
5 & $\frac{52}{97}$ & $\frac{52}{97}$ & 0 & $\frac{23}{86}$\\[0.5ex]
\hline
6 & $\frac{8}{15}$ & $\frac{63}{118}$ & 0 & $\frac{4}{15}$\\[0.5ex]
\hline
7 & $\frac{8}{15}$ & $\frac{295}{553}$ & 0 & $\frac{4}{15}$\\[0.5ex]
\hline
8 & $\frac{8}{15}$ & $\frac{8}{15}$ & 0 & $\frac{4}{15}$\\[0.5ex]
\hline
9 & $\frac{8}{15}$ & $\frac{8}{15}$ & 0 & $\frac{4}{15}$\\[0.5ex]
\hline
10 & $\frac{8}{15}$ & $\frac{8}{15}$ & 0 & $\frac{4}{15}$\\[0.5ex]
\hline
11 & $\frac{8}{15}$ & $\frac{8}{15}$ & 0 & $\frac{4}{15}$\\[0.5ex]
\hline
\end{tabular}
\end{center}

\begin{center}
\begin{tabular}{|c|c|c|c|c|c|c} 
\hline
{n } & $x^n_1$ & $x^n_2$  & $u^n_1$ & $ u^n_2$\\
\hline
0 & 8 & -19 & -9 & -15\\
\hline
1 & $\frac{424}{37}$ & $\frac{4109}{168}$ & $\frac{125}{866}$ & $\frac{113}{44}$\\[0.5ex]
\hline
2 & $\frac{141}{91}$ & $\frac{1296}{657}$ & $\frac{36}{157}$ & $\frac{1}{2}$\\[0.5ex]
\hline
3 & $\frac{11}{15}$ & $\frac{463}{713}$ & $\frac{39}{782}$ & $\frac{25}{79}$\\[0.5ex]
\hline
4 & $\frac{19}{33}$ & $\frac{96}{131}$ & $\frac{4}{379}$ & $\frac{23}{83}$\\[0.5ex]
\hline
5 & $\frac{45}{83}$ & $\frac{80}{139}$ &$ \frac{1}{453}$ & $\frac{25}{93}$\\[0.5ex]
\hline
6 & $\frac{38}{71}$ & $\frac{45}{83}$ & 0 & $\frac{4}{15}$\\[0.5ex]
\hline
7 & $\frac{8}{15}$ & $\frac{213}{398}$ & 0 & $\frac{4}{15}$\\[0.5ex]
\hline
8 & $\frac{8}{15}$ & $\frac{372}{697}$ & 0 & $\frac{4}{15}$\\[0.5ex]
\hline
9 & $\frac{8}{15}$ & $\frac{447}{838}$ & 0 & $\frac{4}{15}$\\[0.5ex]
\hline
10 & $\frac{8}{15}$ & $\frac{8}{15}$ & 0 & $\frac{4}{15}$\\[0.5ex]
\hline
11 & $\frac{8}{15}$ & $\frac{8}{15}$ & 0 & $\frac{4}{15}$\\[0.5ex]
\hline
\end{tabular}
\end{center}

%

\section{Conclusions}
In this paper we presented a Picard iteration for solving a fixed point problem equivalent to a variational inequality on a cylinder. The 
iteration is monotonically convergent to the solution of the variational inequality with respect to the partial order defined by an extended
Lorentz cone. The monotone convergence is based on the isotonicity of the projection onto a cylinder with respect to the partial order defined by
the extended Lorentz cone. We plan to consider the following general questions in the future:
\begin{enumerate}
	\item Given a cone $K$, determine all closed convex sets $C$ onto which the projection is isotone with respect to the partial order 
		defined by the cone.
	\item Given a closed convex set $C$, determine all cones $K$ such that the projection onto $C$ is isotone with to respect the partial 
		order defined by $K$.
	\item Determine the closed convex sets $C$ for which there exists a cone $K$, such that the projection onto $C$ is isotone with respect 
		to the partial order defined by $K$.
\end{enumerate}
Although the above questions are difficult to answer in general, any particular result about them can be important for solving complementarity 
problems and/or variational inequalities by using a monotone convergence. Moreover, any such result could be important in statistics as well, 
where the isotonicity of the projection may occur in various algorithms (see for example \cite{GuyaderJegouNemeth2012}).

\bibliographystyle{hieeetr}
\bibliography{bib}

\end{document}